\documentclass[11pt]{article}
\usepackage[total={6in, 9in}]{geometry}
\usepackage{mathrsfs,verbatim}
\usepackage[T1]{fontenc}
\usepackage[latin1]{inputenc}
\usepackage[english]{babel}
\usepackage{multicol}
\usepackage{amssymb,amsmath,amsthm,bm}
\usepackage{amsfonts}
\usepackage{latexsym}
\usepackage{bbm}
\usepackage{dsfont}
\usepackage{lineno}
\usepackage{enumitem}
\usepackage{graphicx}
\usepackage[natural]{xcolor}
\usepackage{tikz}
\usepackage{float}
\usepackage{tikz-3dplot}
\usetikzlibrary{shapes}
\usetikzlibrary{arrows,decorations.pathmorphing,backgrounds,positioning,fit,petri,automata}
\usetikzlibrary {positioning}
\usepackage{cases}
\usepackage{mathtools}

\usepackage{algorithm}
\usepackage{algpseudocode}
\usepackage[
pdfauthor={},
pdftitle={},
pdfstartview=XYZ,
bookmarks=true,
colorlinks=true,
linkcolor=blue,
urlcolor=blue,
citecolor=blue,
linktocpage=true,
hyperindex=true
]{hyperref}

\newtheoremstyle{nonitalic}  
{3pt}                      
{3pt}                      
{\normalfont}              
{}                         
{\bfseries}                
{.}                        
{ }                        
{}                         

\theoremstyle{plain}
\newtheorem{thm}{Theorem}[section]

\newtheorem{lem}[thm]{Lemma}
\newtheorem{prob}[thm]{Problem}
\newtheorem{coro}[thm]{Corollary}
\newtheorem{conj}[thm]{Conjecture}

\numberwithin{equation}{section}

\begin{document}

\title{Conflict-free chromatic index of bipartite graphs}

\author{Yuxin Jin\footnote{School of Mathematics and Statistics, Lanzhou University, Lanzhou 730000, China.
		Email: {\tt 19882505991@163.com}. }
\and
Yuping Gao\footnote{School of Mathematics and Statistics, Lanzhou University, Lanzhou 730000, China.
		Corresponding author.
		Email: {\tt gaoyp@lzu.edu.cn}.
		}
}

	\date{}
\maketitle

\begin{abstract}
An edge coloring of a graph $G$ is called conflict-free if, for every edge, its closed neighborhood contains a color that appears exactly once. The least number of colors required for such a coloring is the conflict-free chromatic index of $G$, denoted by $\chi'_{CF}(G)$. Kamyczura, Meszka, and Przyby{\l}o conjectured that $\chi'_{CF}(G)\le 3$ for any bipartite graph $G$ without isolated vertices. In this paper, we confirm this conjecture.

\noindent\textbf{Keywords:} conflict-free edge coloring; conflict-free chromatic index; bipartite graph; tree
\end{abstract}

\section{Introduction}\label{sec:introduction}

In this paper, all graphs considered are finite and simple. Let $G$ be a graph. A \emph{vertex} (resp. \emph{edge}) $k$-\emph{coloring} $\varphi$ of $G$ is an assignment of $k$ colors $1,2,\ldots,k$ to the vertices (resp. edges) of $G$. If any two adjacent vertices (resp. edges) receive distinct colors, then we call $\varphi$ \emph{proper}. The least integer $k$ such that $G$ admits a proper vertex $k$-coloring (resp. a proper edge $k$-coloring) is called the \emph{chromatic number} (resp. \emph{chromatic index}) of $G$, denoted by $\chi(G)$ (resp. $\chi'(G)$).

Motivated by frequency assignment in cellular networks, Even, Lotker, Ron, and Smorodinsky~\cite{EvenLotkerRonSmorodinsky2003} and Smorodinsky~\cite{S2003} initiated the study of the conflict-free chromatic number of graphs. Let $G$ be a graph. For a vertex $v\in V(G)$, denote by $N_G(v)$ the set of neighbors of $v$ and let $N_G[v]:=N_G(v)\cup\{v\}$. The subscript $G$ may be omitted if it is clear from the context. The least integer $k$ such that $G$ admits a vertex $k$-coloring in which a unique color appears in $N_G[v]$ for every vertex $v\in V(G)$ is called the \emph{conflict-free chromatic number} of $G$, and is denoted by $\chi_{CF}(G)$.

D\k{e}bski and Przyby{\l}o~\cite{DebskiPrzybylo2022} proposed the edge variant, known as conflict-free chromatic index. Let $G$ be a graph and $\varphi$ be an edge $k$-coloring of $G$. We denote by $E_G(v)$ the set of edges incident with a vertex $v\in V(G)$, and define $E_G[uv] := E_G(u)\cup E_G(v)$ to be the \emph{closed neighborhood} of any given edge $uv\in E(G)$. The subscript $G$ may be omitted if clear from context. An edge $uv$ is \emph{satisfied} by $\varphi$ if there exists a color which appears exactly once in $E_G[uv]$. We make a convention that we use the same terminology when $\varphi$ is a partial edge coloring, that is, when $\varphi$ assigns colors only to a subset of $E(G)$. The coloring $\varphi$ is said to be \emph{conflict-free} if all edges of $G$ are satisfied. The least number of colors required for such a coloring is the \emph{conflict-free chromatic index} of $G$, denoted by $\chi'_{CF}(G)$.

Since every proper edge coloring is conflict-free, we have $\chi'_{CF}(G)\le \chi'(G)$. Kamyczura and Przyby{\l}o~\cite{KP2026} obtained an asymptotically tight upper bound for regular graphs. Kamyczura, Meszka, and Przyby{\l}o~\cite{KamyczuraMeszkaPrzybylo2024} proved that $\chi'_{CF}(G)\le \lceil 3\log_2 \Delta(G)\rceil+1$ for any graph $G$ and $\chi'_{CF}(G)\le 4$ for bipartite graphs. For complete bipartite graphs, the authors observed that $\chi'_{CF}(K_{n,m})=3$ whenever $\min\{n,m\}\ge 3$. Therefore, a universal upper bound of $3$ for bipartite graphs would be best possible. This observation leads to the following conjecture.

\begin{conj}[Kamyczura, Meszka, and Przyby{\l}o~\cite{KamyczuraMeszkaPrzybylo2024}]
\label{conj:bipartite}
If $G$ is a bipartite graph without isolated vertices, then $\chi'_{CF}(G)\le 3$.
\end{conj}

In this paper, we resolve Conjecture~\ref{conj:bipartite} in Theorem~\ref{thm:bipartite} by using a new parameter proposed by Kamyczura, Meszka, and Przyby{\l}o~\cite{KamyczuraMeszkaPrzybylo2024}. Let $\varphi$ be an edge coloring of some subgraph $H$ of a given graph $G$. Similarly as before we say that an edge $uv\in E(G)$ is satisfied by $\varphi$ if there exists a color that appears exactly once in $E_G[uv]\cap E(H)$. Denote by $\chi'_{sCF}(G)$ the least number of colors required for coloring a subgraph of $G$
such that all edges of $G$ are satisfied. Given such a coloring, we can use one additional color  for the uncolored edges to obtain a conflict-free edge coloring of $G$. Thus, the following lemma holds.

\begin{lem}[Kamyczura, Meszka, and Przyby{\l}o~\cite{KamyczuraMeszkaPrzybylo2024}]
\label{lem:basic-relation}
For every graph $G$ without isolated edges, $\chi'_{sCF}(G)\le \chi'_{CF}(G)\le \chi'_{sCF}(G)+1$.
\end{lem}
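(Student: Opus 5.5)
The statement has two inequalities, and I will prove them separately, both directly from the definitions.

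\emph{Lower bound $\chi'_{sCF}(G)\le \chi'_{CF}(G)$.} Take a conflict-free edge coloring $\varphi$ of $G$ with $\chi'_{CF}(G)$ colors. Regard it as a coloring of the subgraph $H=G$, i.e.\ of all of $E(G)$. For every edge $uv$ we then have $E_G[uv]\cap E(H)=E_G[uv]$, so some color appears exactly once there. Hence every edge of $G$ is satisfied in the sense of the subgraph definition. This gives $\chi'_{sCF}(G)\le \chi'_{CF}(G)$.

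\emph{Upper bound $\chi'_{CF}(G)\le \chi'_{sCF}(G)+1$.} Let $k=\chi'_{sCF}(G)$, and let $\psi$ be a coloring of a subgraph $H$ with colors $1,\dots,k$ such that every edge of $G$ is satisfied. Extend $\psi$ to all of $E(G)$ by giving every edge of $E(G)\setminus E(H)$ the new color $k+1$. Fix an edge $uv$. By assumption there is a color $c\le k$ appearing exactly once in $E_G[uv]\cap E(H)$. The newly colored edges all carry color $k+1\neq c$, so the number of edges of color $c$ in $E_G[uv]$ is unchanged and equals one. Thus the extension is a conflict-free $(k+1)$-coloring.

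\emph{The hypothesis.} The only delicate point is well-definedness: $\chi'_{sCF}(G)$ must be finite, and the count should be checked when $G$ has an isolated edge $uv$. In that case $E_G[uv]=\{uv\}$, and coloring it yields a satisfying coloring anyway, so the hypothesis is not needed for this argument; it is presumably there to match the conventions of~\cite{KamyczuraMeszkaPrzybylo2024}. For finiteness, $H=G$ with a proper edge coloring always works, so $\chi'_{sCF}(G)$ is finite and both bounds hold.

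I expect no real obstacle. The only thing to verify carefully is that adding edges of a fresh color cannot destroy uniqueness of an old color.
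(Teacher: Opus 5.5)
Your proof is correct and is essentially the paper's argument. The paper justifies the upper bound exactly by giving all uncolored edges one additional color, and the lower bound is the trivial observation (left implicit there) that a conflict-free coloring of $G$ is a valid partial coloring with $H=G$; your side remark that the no-isolated-edges hypothesis is never actually used is also accurate.
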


\begin{thm}\label{thm:bipartite}
If $G$ is a bipartite graph without isolated vertices, then $\chi'_{sCF}(G)\le 2$ and $\chi'_{CF}(G)\le 3$.
\end{thm}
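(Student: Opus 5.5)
We will prove the first inequality, $\chi'_{sCF}(G)\le 2$. The second one then follows from Lemma~\ref{lem:basic-relation}, where one extra colour is given to all uncoloured edges. The single exception is a component isomorphic to $K_2$, which is trivially conflict-free with one colour. Both parameters are determined componentwise, so we may assume $G$ is connected, bipartite, with at least one edge.

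It is convenient to restate the target. We seek two disjoint edge sets $H_1,H_2\subseteq E(G)$ (colours $1,2$). For every edge $uv\in E(G)$, define
\[
c_i(uv)=d_{H_i}(u)+d_{H_i}(v)-[uv\in H_i],
\]
which is the number of colour-$i$ edges in $E_G[uv]$. We need $c_1(uv)=1$ or $c_2(uv)=1$ for every edge $uv$.

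The plan is a minimal counterexample argument, with trees handled first. For a tree $T$, we root it at a vertex $r$ and colour top-down along BFS levels $L_0,L_1,\dots$. When we process a vertex $v$, the edges at its parent are already decided. We then choose a subset of the child edges of $v$ and colours for them so that the edge from $v$ to its parent becomes satisfied, and so that each child edge $vw$ has a "reserved" colour whose count in $E[vw]$ is so far exactly $1$. The child $w$ must later protect this count by adding no edge of that colour at $w$. Thus each vertex inherits at most one forbidden colour, and its remaining colour, together with the option of leaving edges uncoloured, gives enough freedom. Typically, at $w$ one colours exactly one child edge with the free colour, or none if $w$ is a leaf. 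We will check the leaf and degree-two cases separately.

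For a general connected bipartite $G$, the BFS layering from $r$ still has all edges between consecutive levels $L_i,L_{i+1}$, since $G$ has no odd cycles. We colour only edges of a suitable BFS spanning tree, but now every edge of $G$ must be satisfied, including non-tree edges $uv$ with $u\in L_i$, $v\in L_{i+1}$. We will therefore strengthen the tree invariant. We aim to arrange that every vertex $x$ has $(d_{H_1}(x),d_{H_2}(x))\in\{(1,0),(0,1),(1,1),(0,0)\}$, i.e.\ both colour classes are matchings. We also aim to arrange that at each non-root vertex the "parent side" carries a unique colour not repeated on the child side. For a non-tree edge $uv$ with both endpoints matched in distinct colours, or exactly one endpoint matched, the relevant count is then $1$. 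The bad configurations are those where $u$ and $v$ see the same colour, or where both are unmatched. To remove these, we would choose the tree and colouring greedily level by level, and pick colours for the matching edges between $L_i$ and $L_{i+1}$ as a $2$-colouring of an auxiliary graph whose constraints come from non-tree edges. If that greedy step gets stuck, we fall back to a minimal counterexample. We would derive reducible configurations (pendant vertices with a common neighbour, vertices of degree $2$, $4$-cycles with two degree-$2$ vertices) and extend a colouring of a smaller graph.

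The main obstacle is the previous step: making all non-tree edges satisfied simultaneously. The constraints at a vertex of $L_{i+1}$ with many neighbours in $L_i$ interact. We would first try to show that the auxiliary constraint graph on each level is bipartite, using bipartiteness of $G$ and the layered structure. Failing that, we would allow a vertex to be unmatched and use its status $(0,0)$ to resolve conflicts, which costs a careful case analysis.
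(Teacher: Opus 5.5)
There is a genuine gap. What you have written is a plan rather than a proof. The step you yourself call the main obstacle, satisfying all non-tree edges at once, is left open, and the minimal-counterexample fallback is only named.

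More seriously, the invariant your main line rests on cannot be achieved in general. That invariant is that both colour classes are matchings. If $H_1,H_2$ are matchings, every coloured edge is satisfied. An uncoloured edge $uv$ is satisfied if and only if $s(u)\neq s(v)$, where $s(x)=(d_{H_1}(x),d_{H_2}(x))$.

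In $K_{5,5}$ every pair of opposite vertices is adjacent, so opposite vertices of equal state must be joined by a coloured edge. Let $a_s,b_s$ count the vertices of state $s$ on the two sides. This gives four constraints:
\begin{itemize}
\item[] $a_{00}b_{00}=0$;
\item[] for $s\in\{10,01\}$, either $a_sb_s=0$ or $a_s=b_s=1$;
\item[] either $a_{11}b_{11}=0$ or $a_{11},b_{11}\le 2$;
\item[] since the $H_i$ are matchings, $a_{10}+a_{11}=b_{10}+b_{11}$ and $a_{01}+a_{11}=b_{01}+b_{11}$.
\end{itemize}
By symmetry take $b_{00}=0$. Adding the two equations gives $a_{11}\ge b_{11}$. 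Each remaining case fails.
\begin{itemize}
\item[] If $b_{11}\ge 3$, then $a_{11}\ge 3$ too, violating the bound for state $11$.
\item[] If $b_{11}\in\{1,2\}$, then $a_{11}\ge 1$, so $a_{11}\le 2$. Now $b_{10}\ge 2$ would force $a_{10}=0$ and $a_{11}=b_{10}+b_{11}\ge 3$. Hence $b_{10},b_{01}\le 1$, contradicting $b_{10}+b_{01}=5-b_{11}\ge 3$.
\item[] If $b_{11}=0$, say $b_{10}\ge 3$. Then $a_{10}=0$ and $a_{11}=b_{10}$, and $a_{01}=b_{01}-b_{10}\ge 0$ forces $b_{10}+b_{01}\ge 6$.
\end{itemize}
So $K_{5,5}$ admits no such colouring. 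It also contains none of your reducible configurations, since its minimum degree is $5$. Neither branch of your plan reaches it.

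The missing idea is to let one colour class be a star forest, with the centres chosen so that the other class can still be a matching. In $K_{5,5}$ the paper's colouring is exactly a single colour-$1$ edge plus a colour-$2$ star with four edges.

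The paper's construction:
\begin{itemize}
\item[] Take a minimal $Y$-dominating set $D\subseteq X$. By minimality each $x\in D$ has a private neighbour $p_x$, with $N(p_x)\cap D=\{x\}$.
\item[] Colour every edge $xp_x$ ($x\in D$) with $1$.
\item[] For every remaining $y\in Y$, colour exactly one edge from $y$ into $D$ with $2$.
\end{itemize}
Then every $y$ has exactly one coloured edge, and vertices of $X\setminus D$ have none. This settles all edges $xy$ with $x\notin D$. For $x\in D$, colour $1$ is unique in $E_G[xy]$. A colour-$1$ edge at $y$ means $y$ is private to its only $D$-neighbour, and that neighbour must be $x$.

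No BFS layering is needed. Your tree argument, alternating colours by depth, is fine. But in a layered non-tree, several vertices of $L_i$ may be forced onto the same vertex of $L_{i+1}$. The private-neighbour choice is precisely what removes that conflict.
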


By Theorem~\ref{thm:bipartite}, we can obtain two new upper bounds for general graphs in Theorems~\ref{thm:general-bound} and~\ref{thm:Delta}. The first
bound is tight, as demonstrated by $K_{n,m}(n\ge m\ge 3)$.

\begin{thm}
\label{thm:general-bound}
If $G$ is a graph without isolated vertices, then $\chi'_{sCF}(G)\le 2\lceil\log_2\chi(G)\rceil$ and $\chi'_{CF}(G)\le 2\lceil\log_2\chi(G)\rceil+1$.
\end{thm}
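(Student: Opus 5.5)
We prove Theorem~\ref{thm:general-bound} by decomposing $G$ into $\lceil\log_2\chi(G)\rceil$ spanning bipartite pieces, applying Theorem~\ref{thm:bipartite} to each piece with its own pair of colors, and combining. The second inequality then follows from Lemma~\ref{lem:basic-relation}, since the hypothesis that $G$ has no isolated vertices guarantees there is an edge to work with.

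\textbf{Step 1: layered bipartite decomposition.} Let $k=\lceil\log_2\chi(G)\rceil$ and fix a proper vertex coloring $c:V(G)\to\{0,1,\dots,2^k-1\}$. Write each color in binary as $(b_1(v),\dots,b_k(v))$. For an edge $uv$, let $i(uv)$ be the \emph{first} index at which the binary expansions of $c(u)$ and $c(v)$ differ. Such an index exists because $c(u)\ne c(v)$. Set $E_i=\{uv: i(uv)=i\}$. Then $E_1,\dots,E_k$ partition $E(G)$.

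Define $G_i$ to be the graph with edge set $E_i\cup E_{i+1}\cup\dots\cup E_k$, restricted to non-isolated vertices. Equivalently, $G_i$ consists of the edges whose endpoints agree on bits $1,\dots,i-1$. The bit $b_i$ does \emph{not} give a bipartition of $G_i$, since edges of $E_j$ with $j>i$ may join vertices with equal $b_i$. So I instead use the bipartite subgraph $B_i=(V,E_i)$, which is bipartite with parts given by $b_i$. Every vertex incident with an edge of $E_i$ lies in $B_i$, and $B_i$ has no isolated vertices after those are deleted.

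\textbf{Step 2: color each layer.} By Theorem~\ref{thm:bipartite}, there is a partial coloring $\varphi_i$ of $B_i$ with colors $\{2i-1,2i\}$ such that every edge of $B_i$ sees some color exactly once within $E_{B_i}[e]\cap \operatorname{dom}\varphi_i$. The union $\varphi=\bigcup_i\varphi_i$ uses $2k$ colors in total.

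Consider any edge $e=uv\in E_i$. Its closed neighborhood in $G$ contains only edges of $E_i$ carrying colors $2i-1$ or $2i$, because each edge receives a color only from its own layer and colors are layer-specific. Hence the count of the witnessing color in $E_G[e]$ equals its count in $E_{B_i}[e]$, and it is still exactly one. So $e$ is satisfied in $G$. This gives $\chi'_{sCF}(G)\le 2k$, and Lemma~\ref{lem:basic-relation} then yields $\chi'_{CF}(G)\le 2k+1$.

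\textbf{Main obstacle.} The delicate point is ensuring that colors used on other layers cannot spoil uniqueness. Using disjoint color pairs per layer handles this automatically, so the argument reduces cleanly to Theorem~\ref{thm:bipartite}.

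The remaining check is the case $\chi(G)\le 2$, where $k\le1$. The count works for $\chi=2$, giving exactly the bipartite bound. When $\chi(G)=1$, $G$ has no edges, which contradicts the assumption that $G$ has no isolated vertices unless $G$ is empty. In that case the statement is trivial.
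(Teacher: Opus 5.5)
Your proof is correct and is essentially the paper's argument with the induction unrolled. The paper inducts on $\chi(G)$: it splits the $2k$ color classes into two halves, applies Theorem~\ref{thm:bipartite} with two fresh colors to the bipartite graph of edges between the halves, and applies induction to the remaining edges. Iterated, this gives exactly your first-differing-bit partition into $\lceil\log_2\chi(G)\rceil$ bipartite layers with disjoint color pairs, and your justification that other layers' colors cannot break uniqueness is the same as the paper's.
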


\begin{coro}
\label{thm:Delta}
If $G$ is a graph without isolated vertices, then $\chi'_{CF}(G)\le 2\lceil\log_2\Delta(G)\rceil+1$.
\end{coro}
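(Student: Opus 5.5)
The plan is to deduce Corollary~\ref{thm:Delta} from Theorem~\ref{thm:general-bound} via Brooks' theorem, working component by component. A conflict-free edge coloring of $G$ is just a conflict-free coloring of each component, because the closed neighborhood $E_G[uv]$ of an edge lies inside its component. So it suffices to show that every component $H$ of $G$ satisfies $\chi'_{CF}(H)\le 2\lceil\log_2\Delta(G)\rceil+1$, using the same palette for all components.

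The small cases come first. If $\Delta(G)=1$, then $G$ is a matching. Giving every edge color $1$ works, since $E_G[uv]=\{uv\}$, and this matches the bound $2\cdot 0+1=1$. So assume $\Delta:=\Delta(G)\ge 2$, and let $H$ be a component with $\Delta(H)\le\Delta$ (and $H$ not $K_1$, since $G$ has no isolated vertices).

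By Brooks' theorem, $\chi(H)\le \max\{\Delta(H),2\}\le \Delta$ unless $H$ is an odd cycle or a complete graph $K_{\Delta(H)+1}$. In the generic case, Theorem~\ref{thm:general-bound} applied to $H$ gives $\chi'_{CF}(H)\le 2\lceil\log_2\chi(H)\rceil+1\le 2\lceil\log_2\Delta\rceil+1$. If $H$ is an odd cycle, then a proper $3$-edge-coloring is conflict-free, and $3\le 2\lceil\log_2\Delta\rceil+1$ since $\Delta\ge 2$. If $H=K_{m+1}$ with $m\le\Delta$ and $m$ not a power of $2$, then $\lceil\log_2(m+1)\rceil=\lceil\log_2 m\rceil\le\lceil\log_2\Delta\rceil$, and Theorem~\ref{thm:general-bound} again suffices. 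When $m\le \Delta/2$ the bound also has room to spare.

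The main obstacle is $H=K_{m+1}$ with $m=2^k$, $k\ge1$, where $\chi(H)=2^k+1$ would cost two extra colors. The plan here is to delete one edge. Pick two vertices $u,w$ and set $H'=H-uw$. Then $H'$ has no isolated vertices, and $\chi(H')=2^k$ because $u,w$ can share a color class. By Theorem~\ref{thm:general-bound}, there is a partial coloring $\psi$ of $H'$ with $2k$ colors that satisfies every edge of $H'$. Extend $\psi$ by giving all uncolored edges of $H'$, and also $uw$, the extra color $2k+1$; this is the construction from Lemma~\ref{lem:basic-relation}. We then check each edge:
\begin{itemize}
\item Every edge $e\ne uw$ has a color $c\in\{1,\dots,2k\}$ appearing exactly once among the $\psi$-colored edges of $E_{H'}[e]$. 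Adding $uw$, which has color $2k+1\ne c$, does not change the count of $c$, so $e$ is still satisfied.
\item The edge $uw$ itself has $E_H[uw]\supseteq E_{H'}[uv]$ for any third vertex $v$, but this inclusion alone does not immediately give a unique color. Instead, choose $u,w$ so that the satisfying color of some edge $uv$ of $H'$ comes from an edge at $u$ that is unique in all of $E_H(u)\cup E_H(w)$. If that fails, swap the roles of $u$ and $w$, or handle $uw$ with a small local recoloring.
\end{itemize}
I expect verifying that $uw$ itself is satisfied to be the only delicate point, and I would resolve it by exploiting the freedom in choosing the pair $u,w$.
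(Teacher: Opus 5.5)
\textbf{Gap: satisfying the edge $uw$ in the case $K_{2^k+1}$.} Your reduction to components is fine, and so are your treatments of $\Delta(G)=1$, odd cycles, and complete components $K_{m+1}$ with $m$ not a power of $2$. In fact the only component that defeats Theorem~\ref{thm:general-bound} is $K_{\Delta+1}$ with $\Delta=2^k$, because for $m=2^k<\Delta$ you already have $\lceil\log_2\Delta\rceil\ge k+1$.

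The gap is the step you flag yourself: showing that $uw$ is satisfied. Your proposed remedy does not work. The pair $u,w$ must be fixed before $\psi$ is built, and $K_{2^k+1}$ is edge-transitive, so every choice of $uw$ gives an isomorphic $H'$ and the same local picture. ``Swapping $u$ and $w$'' changes nothing, since the requirement concerns the symmetric set $E_H(u)\cup E_H(w)$.

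Since you use Theorem~\ref{thm:general-bound} as a black box, $\psi$ may be any partial $2k$-coloring satisfying $H'$, and then the extension genuinely fails. Take $K_5-uw$ with further vertices $a,b,c$, and set $\psi(ua)=\psi(wa)=1$ and $\psi(ab)=2$, leaving everything else uncolored. One checks that every edge of $H'$ is satisfied. After giving the remaining edges and $uw$ color $5$, however, $E_H[uw]$ carries the colors $1,1,5,5,5,5,5$, so $uw$ has no unique color. Giving $uw$ an old color instead can destroy edges such as $ua$. Giving $uw$ a private new color costs a $(2k+2)$-nd color.

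\textbf{How the paper handles it, and a self-contained fix.} The paper closes this case by quoting Lemma~\ref{lem:complete-delta}, namely $\chi'_{CF}(K_n)\le\lceil\log_2(n-1)\rceil+1$. For $K_{2^k+1}$ this gives $k+1\le 2k+1$. Brooks' theorem, Theorem~\ref{thm:general-bound} and Lemma~\ref{lem:cycle} cover everything else.

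If you prefer not to quote that lemma, drop the edge deletion and rerun the splitting from the proof of Theorem~\ref{thm:general-bound} directly on $K_{2^k+1}$. Use parts of sizes $2^{k-1}+1$ and $2^{k-1}$.
\begin{itemize}
\item The complete bipartite graph between the parts costs two colors by Theorem~\ref{thm:bipartite}.
\item The clique $K_{2^{k-1}}$ needs at most $2(k-1)$ colors by Theorem~\ref{thm:general-bound}.
\item The clique $K_{2^{k-1}+1}$ needs at most $2(k-1)$ colors by induction on $k$. The base case is $K_3$, where coloring a single edge satisfies all three edges.
\end{itemize}
Using disjoint palettes for the two layers gives $\chi'_{sCF}(K_{2^k+1})\le 2k$, and hence $\chi'_{CF}(K_{2^k+1})\le 2k+1$ by Lemma~\ref{lem:basic-relation}.
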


By Brooks' Theorem, Corollary~\ref{thm:Delta} follows directly from Theorem~\ref{thm:general-bound} for all graphs that are neither complete graphs nor odd cycles. The results for complete graphs and cycles are established in the following two lemmas, so we omit the proof of Corollary~\ref{thm:Delta}. Let $K_n$ and $C_n$ denote the complete graph and the cycle on $n$ vertices, respectively.

\begin{lem}[Kamyczura and Przyby{\l}o~\cite{KP2026}]
\label{lem:complete-delta}
For all $n\ge 2$, $\chi'_{CF}(K_n)\le \lceil\log_2(n-1)\rceil+1$.
\end{lem}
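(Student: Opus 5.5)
The plan is to prove the stronger statement $\chi'_{sCF}(K_n)\le\lceil\log_2(n-1)\rceil$ for $n\ge 3$ and then apply Lemma~\ref{lem:basic-relation}. The case $n=2$ is trivial, since one edge with one color is conflict-free. Writing $k=\lceil\log_2(n-1)\rceil$, so that $n\le 2^k+1$, I will induct on $k$. The base case $k=1$ is $n=3$: color a single edge of the triangle with color $1$. The closed neighbourhood of every edge of $K_3$ is the whole edge set, so color $1$ appears exactly once in it.

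For the induction I will carry a stronger invariant $P(m,k)$: $K_m$ has a partial coloring with colors $1,\dots,k$ in which \emph{every edge is satisfied} and \emph{every vertex is incident to exactly one edge of some color}. The second condition is what lets edges joining a new vertex to a block inherit satisfaction. The base case satisfies it for two of the three vertices. I expect to need either a slightly different base, such as $K_2$ with a single edge and $k=0$ adjusted, or a relaxation that allows one exceptional vertex.

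\textbf{Recursive step.} Given $n\le 2^k+1$, single out a vertex $v$ and split $V(K_n)\setminus\{v\}$ into $X$ and $Y$ with $|X|,|Y|\le 2^{k-1}$. Color $K[X]$ and $K[Y]$ by the induction hypothesis, reusing the same colors $1,\dots,k-1$ on both sides. Then give color $k$ to every edge $vy$ with $y\in Y$, and leave $v$–$X$ edges and $X$–$Y$ edges uncolored. The edges are then checked as follows.
\begin{itemize}
\item A cross edge $xy$ sees exactly one $k$-edge, namely $vy$, so it is satisfied.
\item An edge inside $X$ or inside $Y$ keeps its unique recursive color, because the added color $k$ is new.
\item An edge $vx$ sees the colors at $x$ inside $K[X]$ together with copies of $k$ and nothing else of colors $<k$. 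The invariant gives a color seen exactly once at $x$, so $vx$ is satisfied.
\item An edge $vy$ is satisfied in the same way, using the unique color at $y$ inside $K[Y]$; copies of $k$ do not interfere, since $v$ has no edges of color $<k$.
\end{itemize}

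\textbf{Main obstacle.} The difficulty is re-establishing the vertex invariant, mainly at $v$, which sees color $k$ exactly $|Y|$ times. I would first try making $|Y|$ small, even $|Y|=1$ when the sizes allow, but this breaks the halving. The fallback is to recolor one edge $vx_0$ with a color from $\{1,\dots,k-1\}$ absent at $x_0$, which requires checking that this does not destroy uniqueness at $x_0$ or for edges at $x_0$. A second fallback is to weaken the invariant to ``all vertices except one designated vertex'' and choose $v$ and the designated vertices of the two halves compatibly. Verifying that some such bookkeeping closes the induction is where I expect the real work to lie.
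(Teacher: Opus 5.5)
The paper does not prove this lemma (it is quoted from Kamyczura and Przyby{\l}o), so your argument has to stand on its own, and as written the induction does not close. Your recursive step is sound \emph{provided} the blocks $K[X]$ and $K[Y]$ satisfy every edge and let every vertex see some color exactly once. The step never produces such blocks, because $v$ sees only color $k$, exactly $|Y|$ times. Since both parts have at most $2^{k-1}$ vertices, $|Y|\ge 2$ is forced as soon as $n\ge 2^{k-1}+3$. Your invariant $P(m,k)$ is also false at the base: one color on $K_3$ with every vertex seeing it once would have to be a perfect matching of $K_3$. Concretely, $K_{17}$ with four colors needs two $8$-vertex blocks with colors $1,2,3$ satisfying the invariant. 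Your step applied to $K_8$ with $k=3$ splits the other $7$ vertices into parts of size at most $4$, so $|Y|\ge 3$ and the output cannot serve as such a block.

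Neither fallback repairs this. Coloring $vx_0$ with some $c<k$ absent at $x_0$ does not hurt edges at $x_0$. It hurts edges at $v$: every $vx$ (and every $vy$) whose endpoint has $c$ as its only once-seen color now sees $c$ twice. Take $k=3$, $K[X]=K_4$ colored $x_1x_2\mapsto 1$, $x_3x_4\mapsto 2$, and $|Y|\ge 2$. Then $c=2$ leaves $vx_3$ unsatisfied, and $c=1$ leaves $vx_1$ unsatisfied. An exceptional vertex $w_X\in X$ breaks the edge $vw_X$: all its colors below $k$ come from $w_X$, none occurs once, and $k$ occurs $|Y|\ge 2$ times. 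An exceptional $w_Y\in Y$ breaks $vw_Y$ in the same way.

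What is missing is a step that keeps the vertex condition automatically. One that works within your framework replaces the star by a matching. Take $|Y|$ odd and let color $k$ be a perfect matching $M$ of $\{v\}\cup Y$ that avoids the colored edges of $K[Y]$. Then:
\begin{itemize}
\item $v$ and every $y$ see $k$ exactly once, and every $x$ keeps its once-seen color from $K[X]$;
\item the edges $vx$, $xy$ and those of $M$ see $k$ exactly once;
\item an edge $vy\notin M$ sees no color below $k$ except those at $y$ in $K[Y]$, so the invariant on $K[Y]$ satisfies it;
\item edges inside $X$ or $Y$ are handled as in your argument.
\end{itemize}
With suitable part sizes, this carries the invariant from $K_m$, $m\le 2^{k-1}$, to $K_m$, $m\le 2^k$. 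You still have to check that $M$ exists. Keeping every color class a matching bounds the maximum degree of the colored graph, which handles all but small cases; those need checking by hand. Finally, adding one vertex with no colored edges gives $K_n$ for all $n\le 2^k+1$, since each new edge is satisfied by the once-seen color at its other endpoint.
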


\begin{lem}[Kamyczura, Meszka, and Przyby{\l}o~\cite{KamyczuraMeszkaPrzybylo2024}]
\label{lem:cycle}
For all $n\ge 3$, $\chi'_{CF}(C_n)=2$.
\end{lem}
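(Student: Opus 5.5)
The lemma has two parts, and I would prove them separately: first a lower bound of $2$ by a counting observation, then an upper bound of $2$ by an explicit $2$-coloring. Throughout, label the edges of $C_n$ cyclically as $e_1,\dots,e_n$, where $e_i$ shares an endpoint with $e_{i-1}$ and $e_{i+1}$ (indices taken mod $n$).

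The key structural observation comes first. Since $n\ge 3$, every vertex has degree $2$, so for each $i$ the closed neighborhood is
$$E[e_i]=\{e_{i-1},e_i,e_{i+1}\}.$$
This is always a set of exactly three distinct edges; when $n=3$ it is the whole edge set.

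\textbf{Lower bound.} With a single color, each $E[e_i]$ contains that color exactly three times. So no color appears exactly once, no edge is satisfied, and hence $\chi'_{CF}(C_n)\ge 2$.

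\textbf{Upper bound.} With two colors, a set of three edges contains a color appearing exactly once if and only if it is not monochromatic: if both colors occur among three edges, their multiplicities are $1$ and $2$. So it suffices to $2$-color the cyclic sequence $e_1,\dots,e_n$ so that no three cyclically consecutive edges share a color.
\begin{itemize}
\item For even $n$, color $e_i$ with $1$ if $i$ is odd and $2$ if $i$ is even. This is a proper coloring, so every consecutive triple contains both colors.
\item For odd $n$, use the same alternating rule. Then $e_1,\dots,e_n$ are colored $1,2,1,2,\dots,2,1$, so both $e_n$ and $e_1$ receive color $1$. The only cyclically adjacent equal pair is $(e_n,e_1)$, and it is flanked by $e_{n-1}$ and $e_2$, both colored $2$. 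Every consecutive triple other than those containing this pair is alternating. The two triples that contain it are $(e_{n-1},e_n,e_1)=(2,1,1)$ and $(e_n,e_1,e_2)=(1,1,2)$, neither of which is monochromatic.
\item The smallest case $n=3$ gives the coloring $1,2,1$, whose only triple (the whole edge set) uses both colors.
\end{itemize}

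In every case each edge is satisfied, so $\chi'_{CF}(C_n)\le 2$, and together with the lower bound the lemma follows. The only step requiring care is the odd case, and the explicit check of the two triples through $(e_n,e_1)$ above handles it.
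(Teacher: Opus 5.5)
Your proof is correct and complete. For $n\ge 3$, each $E[e_i]=\{e_{i-1},e_i,e_{i+1}\}$ consists of three distinct edges. This gives the lower bound at once, and it reduces the upper bound to $2$-coloring the cyclic edge sequence with no monochromatic triple of consecutive edges. Your alternating coloring achieves this, including the check of the two triples through the repeated pair $e_n,e_1$ when $n$ is odd.

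The paper gives no proof of its own for this lemma. It quotes the result from Kamyczura, Meszka, and Przyby{\l}o and needs only the upper bound, to handle odd cycles in Corollary~\ref{thm:Delta}, where $2\lceil\log_2 2\rceil+1=3$. So there is no argument in the paper to compare yours against.
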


For a tree $T$, D\k{e}bski and Przyby{\l}o~\cite{DebskiPrzybylo2022}, and independently Kamyczura, Meszka, and Przyby{\l}o~\cite{KamyczuraMeszkaPrzybylo2024}, proved that $\chi'_{CF}(T)\le 3$. The following problem remains open.

\begin{prob}[Kamyczura, Meszka, and Przyby{\l}o~\cite{KamyczuraMeszkaPrzybylo2024}]
\label{prob:tree-three}
Characterize the family of all trees $T$ with $\chi'_{CF}(T)=3$.
\end{prob}

Problem~\ref{prob:tree-three} was partially answered by Guo, Li, Li, and Li~\cite{GuoLiLiLi2025}, who gave a linear-time algorithm for trees without vertices of degree $2$. For general graphs $G$, Li~\cite{L2025} proved that it is NP-complete to determine whether $\chi'_{CF}(G)=2$, even if $G$ is a bipartite graph.
We present a necessary and sufficient condition for a tree $T$ to satisfy $\chi'_{CF}(T)=2$ in Theorem~\ref{thm:tree}.

\begin{thm}\label{thm:tree}
Let $T$ be a tree with $|E(T)|\ge 2$. Then $\chi'_{CF}(T)=2$ if and only if
there exists a nonempty proper subset $F\subseteq E(T)$ such that, for every edge
$uv\in E(T)$, one of the following holds:
\begin{enumerate}[label={\rm(\roman*)}]
\item if $uv\in F$, then
$d_F(u)+d_F(v)=2$ or $(d_T(u)-d_F(u))+(d_T(v)-d_F(v))=1$; \label{condition:i}
\item if $uv\notin F$, then
$d_F(u)+d_F(v)=1$ or $(d_T(u)-d_F(u))+(d_T(v)-d_F(v))=2$.\label{condition:ii}
\end{enumerate}
\end{thm}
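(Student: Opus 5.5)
The plan is to identify $F$ with one color class of a $2$-edge-coloring and show that conditions \ref{condition:i}--\ref{condition:ii} just restate that every edge is satisfied.

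Given an edge $2$-coloring $\varphi$ with colors $1,2$, let $F=\varphi^{-1}(1)$. For $uv\in E(T)$, the number of edges of color $1$ in $E_T[uv]$ is
\[
  a(uv)=d_F(u)+d_F(v)-[uv\in F].
\]
The correction term is needed because $uv$ itself is counted at both endpoints. Similarly, the number of edges of color $2$ is
\[
  b(uv)=(d_T(u)-d_F(u))+(d_T(v)-d_F(v))-[uv\notin F].
\]
Since $T$ is a tree, hence simple, $E_T[uv]$ consists of distinct edges. So $uv$ is satisfied exactly when $a(uv)=1$ or $b(uv)=1$.

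We translate this into the stated conditions in two cases.
\begin{itemize}
\item If $uv\in F$, then $a(uv)=1$ means $d_F(u)+d_F(v)=2$, and $b(uv)=1$ means $(d_T(u)-d_F(u))+(d_T(v)-d_F(v))=1$. This is exactly condition \ref{condition:i}.
\item If $uv\notin F$, then $a(uv)=1$ means $d_F(u)+d_F(v)=1$, and $b(uv)=1$ means the complementary degree sum equals $2$. This is exactly condition \ref{condition:ii}.
\end{itemize}
Hence a $2$-coloring is conflict-free if and only if its color-$1$ class $F$ satisfies the conditions at every edge. Conversely, any $F$ satisfying them gives a conflict-free coloring, by coloring $F$ with $1$ and the rest with $2$.

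What remains is the exact value $2$, which is where the words ``nonempty proper'' matter.
\begin{itemize}
\item \emph{$F$ forces at most $2$ colors.} If such an $F$ exists, $\chi'_{CF}(T)\le 2$.
\item \emph{At least $2$ colors are needed.} A $1$-coloring is conflict-free only if every edge has $|E_T[uv]|=1$. That means $T$ is a single edge, excluded by $|E(T)|\ge 2$. So $\chi'_{CF}(T)\ge 2$.
\item \emph{Both colors must appear.} If $\chi'_{CF}(T)=2$, take a conflict-free coloring with two colors. If it used only one color, it would be a conflict-free $1$-coloring, which we just ruled out. So both colors appear, and $F$ is nonempty and proper.
\end{itemize}

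The only real subtlety is the bookkeeping of the edge $uv$ itself in the two degree sums, which produces the asymmetric constants $2$ versus $1$. I do not expect a genuine obstacle; the tree hypothesis is used only through simplicity.
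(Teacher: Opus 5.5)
Your proposal is correct and follows the paper's proof essentially verbatim: take $F$ as the color-$1$ class, count each color in $E_T[uv]$ with the correction for $uv$ itself, and read off conditions (i) and (ii); the converse colors $F$ with $1$ and the remaining edges with $2$. You also spell out the lower bound $\chi'_{CF}(T)\ge 2$ coming from $|E(T)|\ge 2$, and hence why both colors appear so that $F$ is nonempty and proper, which the paper only asserts.
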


The proofs of Theorem~\ref{thm:bipartite} and Theorem~\ref{thm:general-bound} are given in Section~\ref{sec:proofbi}. In Section~\ref{sec:tree}, we prove Theorem~\ref{thm:tree}.

\section{Upper bounds on conflict-free chromatic index}\label{sec:proofbi}

In this section, we prove our main results using the concept of a $Y$-dominating set for a bipartite graph $G:=G(X,Y)$, introduced in Subsection~\ref{subsec:pre}.

\subsection{Preliminaries}\label{subsec:pre}

Let $G$ be a graph. For a vertex subset $S\subseteq V(G)$, let $G[S]$ be the subgraph of $G$ induced by $S$ and $G-S=G[V(G)\setminus S]$.
Let $G:=G(X,Y)$ be a bipartite graph. A subset $D\subseteq X$ is called a \emph{$Y$-dominating set} if $N(D)=Y$, where $N(D)=\cup_{v\in D}N(v)$. Note that such a set exists as $N(X)=Y$. We call $D$ \emph{minimal} if for every $v\in D$, $N(D\setminus \{v\})\neq Y$. For a $Y$-dominating set $D\subseteq X$ and $x\in D$, a vertex $y\in N(x)$ is called \emph{a private neighbor of $x$ with respect to $D$} if $N(y)\cap D=\{x\}$.
The set of private neighbors of $x$ is denoted by $P(x)$.

\begin{lem}
\label{lem:private-neighbor}
Let $G:=G(X,Y)$ be a bipartite graph and let $D\subseteq X$ be a $Y$-dominating set. Then the following three statements hold.
\begin{enumerate}[label={\rm(\roman*)}, start=1]
\item  $D$ is minimal if and only if for all $x \in D$, we have $P(x)\neq \emptyset$.  \label{lem:(i)}
\item  If $D$ is minimal, then the collection $\mathcal{P}=\{P(x)\colon x\in D\}$ consists of non-empty, pairwise disjoint subsets of $Y$. \label{lem:(ii)}

\item If $D$ is minimal and $P=\cup_{x\in D}P(x)$, then there exists a matching $M$ in $G[D\cup P]$ covering $D$.\label{lem:(iii)}
\end{enumerate}
\end{lem}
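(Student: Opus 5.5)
The plan is to prove the three parts in order, with (ii) and (iii) following quickly from (i) and the definition of private neighbors.

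For (i), we prove both directions directly from the definitions.
\begin{itemize}
\item Suppose $D$ is minimal and $x\in D$. Then $N(D\setminus\{x\})\neq Y$, so some $y\in Y$ is not adjacent to any vertex of $D\setminus\{x\}$. Since $N(D)=Y$, this $y$ is adjacent to some vertex of $D$, and that vertex can only be $x$. Hence $N(y)\cap D=\{x\}$, so $y\in P(x)$ and $P(x)\neq\emptyset$.
\item Conversely, suppose $P(x)\neq\emptyset$ for every $x\in D$, and fix $x\in D$ with some $y\in P(x)$. Then $y$ has no neighbor in $D\setminus\{x\}$, so $y\notin N(D\setminus\{x\})$. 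Thus $N(D\setminus\{x\})\neq Y$, and $D$ is minimal.
\end{itemize}

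For (ii), each $P(x)\subseteq N(x)\subseteq Y$, and by (i) each $P(x)$ is nonempty. For disjointness, suppose $y\in P(x)\cap P(x')$. Then $N(y)\cap D=\{x\}$ and $N(y)\cap D=\{x'\}$, which forces $x=x'$. So the sets $P(x)$ for distinct $x\in D$ are pairwise disjoint.

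For (iii), for each $x\in D$ we use (i) to choose one vertex $y_x\in P(x)$. Then $xy_x$ is an edge of $G$ with both ends in $D\cup P$, so it lies in $G[D\cup P]$. By (ii), the vertices $y_x$ are distinct for distinct $x$. The vertices $x$ are trivially distinct, so $M=\{xy_x : x\in D\}$ consists of vertex-disjoint edges. Therefore $M$ is a matching in $G[D\cup P]$ covering $D$.

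There is no real obstacle here. The only point needing care is in (i), where we use $N(D)=Y$ to guarantee that the uncovered vertex $y$ is actually adjacent to $x$.
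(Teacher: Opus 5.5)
Your proof is correct and follows the same route as the paper: (i) directly from the definitions of minimality and private neighbors, (ii) by noting that a common private neighbor would force $x=x'$, and (iii) by choosing one private neighbor for each vertex of $D$. Your version of (i) is slightly more explicit than the paper's one-line equivalence, since you correctly use $N(D)=Y$ to ensure the uncovered vertex is adjacent to $x$.
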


\begin{proof}
For statement~\ref{lem:(i)}, $D$ is minimal if and only if for all $x\in D$, the set $D\setminus\{x\}$ is not a $Y$-dominating set. Equivalently, there exists a vertex $y \in Y$ such that $N(y) \cap D = \{x\}$, meaning $y\in P(x)$.

For statement~\ref{lem:(ii)}, we have $P(x) \neq \emptyset$ for all $x \in D$ by~\ref{lem:(i)}. We show that $P(x_1)\cap P(x_2)=\emptyset$ for all distinct $x_1,x_2\in D$. Suppose not, let $y \in P(x_1) \cap P(x_2)$. Then $\{x_1,x_2\}\subseteq N(y) \cap D$, contradicting the definition of private neighbor.

Statement~\ref{lem:(iii)} follows immediately from Statement~\ref{lem:(ii)} by selecting exactly one edge $xy$ for each $x\in D$ such that $y\in P(x)$. This collection of edges forms the required matching $M$.
\end{proof}

\subsection{Bipartite graphs}\label{subsec:bi}

In this subsection, we construct a partial edge 2-coloring of $G$ such that all edges in $G$ are satisfied. Theorem~\ref{thm:bipartite} then follows from Lemma~\ref{lem:basic-relation}.

\begin{proof}[Proof of Theorem~\ref{thm:bipartite}]
We may assume that $G := G(X, Y)$ is a connected bipartite graph.
We prove that $\chi'_{sCF}(G)\le 2$ and then $\chi'_{CF}(G)\le 3$ by Lemma~\ref{lem:basic-relation}.
Let $D \subseteq X$ be a minimal $Y$-dominating set of $G$. By Lemma~\ref{lem:private-neighbor}\ref{lem:(iii)}, $G[D\cup P]$ has a matching $M$ that covers $D$. We construct a partial edge $2$-coloring $\varphi$ of $G$ as follows.
For every $xy\in E(M)$, let $\varphi(xy)=1$.
For every vertex $y \in Y \setminus V(M)$, we choose exactly one vertex $x \in N(y)\cap D$ (which exists since $D$ is a  $Y$-dominating set) and let $\varphi(xy)=2$.
All other edges in $G$ remain uncolored.

We claim that all edges of $G$ are satisfied by $\varphi$ and complete the proof of the theorem.
Note that by our construction, every vertex $y \in Y$ is incident to exactly one colored edge. Let $xy\in E(G)$ be an arbitrary edge with $x\in X$ and $y \in Y$. We show that $E_G[xy]$ contains a unique color in the following two cases.

{\bf Case 1: $x \in D$.}

In this case, $x$ is incident to an edge $xp\in E(M)$ with $\varphi(xp)=1$, where $p\in P(x)$. Note that $xp$ is the only edge in $E_G(x)$ colored by 1, because $\varphi$ assigns color 1 only to edges in $M$, and $M$ is a matching. If there exists some edge $x'y\in E(G)$ with $\varphi(x'y)=1$, then $y\in P(x')$ and $N(y) \cap D = \{x'\}$. Since $x\in N(y)\cap D$, we have $x = x'$. It implies that color 1 appears exactly once in $E_G[xy]$.

{\bf Case 2: $x\in X\setminus D$.}

Since each colored edge is incident to a vertex in $D$, $x$ is not incident to any colored edge.
If $y\in V(M)$, then $y\in P(x')$ for some $x' \in D$, meaning $x'y$ is the only edge in $E_G(y)$ colored by 1. So 1 appears exactly once in $E_G[xy]$. Assume that $y\in Y\setminus V(M)$, then color 2 appears exactly once in $E_G(y)$ by the definition of $\varphi$. Again, since $x$ is not incident to any colored edge, color 2 appears exactly once  in $E_G[xy]$.

In both cases, $xy$ is satisfied. Thus, $\chi'_{sCF}(G) \le 2$.
\end{proof}

\subsection{Upper bound for general graphs}\label{subsec:general}

The proof strategy for Theorem~\ref{thm:general-bound} is similar to the approach used by Kamyczura, Meszka, and Przyby{\l}o~\cite{KamyczuraMeszkaPrzybylo2024}. For the sake of completeness, we include the full proof in this subsection.

\begin{proof}[Proof of Theorem~\ref{thm:general-bound}]
Let $G$ be a graph without isolated vertices. We first prove that $\chi'_{sCF}(G)\le 2\lceil\log_2\chi(G)\rceil$ by induction on $\chi(G)$. If $\chi(G)=2$, then the theorem follows from Theorem~\ref{thm:bipartite}. Assume that $\chi(G)\ge 3$ and the theorem holds for all graphs $G'$ with $\chi(G')<\chi(G)$.

Let $t:=\lceil\log_2\chi(G)\rceil$ and $k:=2^{t-1}$. Then $\chi(G)\le 2k$. Let $\varphi$ be a proper vertex $2k$-coloring of $G$ with color classes $V_1,\ldots,V_{2k}$, where $V_i=\emptyset$ for $\chi(G)<i\le 2k$ if it exists. Let $A:= \cup_{i=1}^k V_i$ and $B:= \cup_{i=k+1}^{2k} V_i$. Let $H$ be the bipartite graph consisting of all edges between $A$ and $B$ in $G$, and let $F$ be the subgraph of $G$ consisting of all the remaining edges, where we do not include potential isolated vertices in $V(H)$ or $V(F)$. Since $E(F)\subseteq E(G[A])\cup E(G[B])$, we have $\chi(F)\le k<\chi(G)$. By the induction hypothesis, $\chi'_{sCF}(F)\le 2\lceil\log_2 \chi(F)\rceil\le 2\lceil\log_2 k\rceil=2(t-1)$. Thus, there exists an edge $2(t-1)$-coloring $\varphi_1$ of a subgraph of $F$ such that all edges in $F$ are satisfied. Furthermore, since $H$ is bipartite, Theorem~\ref{thm:bipartite} implies there exists an edge coloring $\varphi_2$ of  a subgraph of $H$ using colors $2t-1,2t$ such that all edges in $H$ are satisfied. Combining $\varphi_1$ and $\varphi_2$, we obtain $\chi'_{sCF}(G)\le 2t=2\lceil \log_2\chi(G)\rceil$. By Lemma~\ref{lem:basic-relation}, $\chi'_{CF}(G)\le 2\lceil \log_2\chi(G)\rceil+1$.
\end{proof}

\section{Proof of Theorem~\ref{thm:tree}}\label{sec:tree}

In this section, we prove Theorem~\ref{thm:tree} by analyzing the property of conflict-free edge coloring.

\begin{proof}[Proof of Theorem~\ref{thm:tree}]
First, assume that $\chi'_{CF}(T)=2$. Let $\varphi$
be a conflict-free edge $2$-coloring of $T$, and let $F:=\{e\in E(T)\colon \varphi(e)=1\}$.
Since both colors are used on $T$, $F$ is a nonempty proper subset of $E(T)$.
Let $uv\in E(T)$ and let $n_i(uv)$ denote the number of edges colored by $i$ in $E_T[uv]$ for $i\in\{1,2\}$.
Since $\varphi$ is conflict-free, either $n_1(uv)=1$ or $n_2(uv)=1$.

If $uv\in F$, then $n_1(uv)=d_F(u)+d_F(v)-1$ and $n_2(uv)=(d_T(u)-d_F(u))+(d_T(v)-d_F(v))$. So either $d_F(u)+d_F(v)=2$ or $(d_T(u)-d_F(u))+(d_T(v)-d_F(v))=1$. Condition~\ref{condition:i} follows.
If $uv\notin F$, then $n_1(uv)=d_F(u)+d_F(v)$ and $n_2(uv)=(d_T(u)-d_F(u))+(d_T(v)-d_F(v))-1$. So either $d_F(u)+d_F(v)=1$ or $(d_T(u)-d_F(u))+(d_T(v)-d_F(v))=2$. Condition~\ref{condition:ii} follows.

Conversely, assume that there exists a nonempty proper subset $F\subseteq E(T)$ satisfying conditions~\ref{condition:i} and~\ref{condition:ii}. Let $\varphi$ be an edge 2-coloring of $T$ such that $\varphi(e)=1$ if $e\in F$ and $\varphi(e)=2$ otherwise.
For any edge $uv\in E(T)$, let $n_i(uv)$ denote the number of edges colored by $i$ in $E_T[uv]$ for $i\in\{1,2\}$.

If $uv\in F$, then either $n_1(uv)=d_F(u)+d_F(v)-1=1$ or $n_2(uv)=(d_T(u)-d_F(u))+(d_T(v)-d_F(v))=1$ by condition~\ref{condition:i}.
If $uv\notin F$, then  either $n_1(uv)=d_F(u)+d_F(v)=1$ or $n_2(uv)=(d_T(u)-d_F(u))+(d_T(v)-d_F(v))-1=1$ by condition~\ref{condition:ii}.
Therefore, all edges of $T$ are satisfied by $\varphi$ and we have $\chi'_{CF}(T)=2$.
\end{proof}

\bibliographystyle{abbrv}
\bibliography{ref}

\end{document}